\numberwithin{equation}{section}
\newtheorem{theorem}{Theorem}[section]
\newtheorem{prop}[theorem]{Proposition}
\newtheorem{defn}[theorem]{Definition}
\newtheorem{cor}[theorem]{Corollary}
\newtheorem{lem}[theorem]{Lemma}
\theoremstyle{remark}
\newtheorem*{oss}{{\bf Remark}}
\newcommand{\taubar}{{\bar \tau}}
\def\deltauno{\delta}
\def\deltadue{\delta}
\begin{document}
\title{On the existence of $L^2$-valued thermodynamic entropy solutions for a
  hyperbolic system with boundary conditions}
\author{
  Stefano Marchesani\\
Stefano Olla}
\maketitle

\abstract
We prove existence of $L^2$-weak solutions of a 
quasilinear wave equation with boundary conditions.
This describes the isothermal evolution of a one dimensional non-linear elastic material,
attached to a fixed point on one side and subject to a force (tension) applied to the other side.
The $L^2$-valued solutions appear naturally when studying the hydrodynamic limit
from a microscopic dynamics of a chain of anharmonic springs connected to a thermal bath.
The proof of the existence is done using a vanishing viscosity approximation
with extra Neumann boundary conditions added.
In this setting 
we obtain 
 a uniform a priori estimate in $L^2$,
allowing us to use $L^2$ Young measures,
together with the classical tools of compensated compactness.
We then prove that the viscous solutions converge to weak solutions of
the quasilinear wave equation
strongly in $L^p$, for any $p \in [1,2)$,
that satisfy, in a weak sense, the boundary conditions. 
Furthermore, these solutions satisfy, beside the local Lax entropy condition,
the Clausius inequality: the change of the free energy is bounded
by the work done by the boundary tension.
In this sense they are the correct thermodynamic solutions,
and we conjecture their uniqueness.
\bigbreak

Keywords: hyperbolic conservation laws, quasi-linear wave equation,
boundary conditions, weak solutions,
vanishing viscosity, compensated compactness, entropy solutions, Clausius inequality.
\\
Mathematics Subject Classification numbers: 35L40, 35D40

{\let\thefootnote\relax
\footnote{{\today}}
}

\section{Introduction}
\label{sec:intro}

The problem of existence of weak solutions for hyperbolic systems of conservation law
in a bounded domain has been studied for solutions that are of bounded variation or in $L^\infty$
\cite{chen-frid1999}.
In the scalar case some works extend to $L^\infty$ solutions,
obtained from viscous approximations \cite{otto}.
But viscous approximations require extra boundary conditions, that are usually taken of Dirichlet type.

We present here an approach based on viscosity approximations, where the extra boundary conditions
are of Neumann type, to reflect the conservative nature of the viscous approximation.
We consider here the quasilinear wave equation
\begin{equation} \label{eq:hyperintro}
\begin{cases}
r_t -p_x = 0
\\
p_t-\tau(r)_x = 0
\end{cases}, \qquad (t,x) \in \mathbb R_+ \times [0,1]
\end{equation}
where $\tau(r)$ is a strictly increasing regular function of $r$ such that $0 < c_1 \le \tau'(r)  \le c_2$,
for some constant $c_1,c_2$. In section \ref{sec:definition} we will require
some more technical assumption for $\tau$.
We add to the system the following boundary conditions:
\begin{equation}
p(t,0)= 0, \qquad \tau(r(t,1))= \taubar(t)
\end{equation}
and initial data
\begin{equation}
r(0,x)=r_0(x), \qquad p(0,x)= p_0(x).
\end{equation}
The boundary tension $\taubar : \mathbb R_+ \to \mathbb R$
is smooth and bounded with bounded derivative.

The equations \eqref{eq:hyperintro} describe the isothermal evolution of an elastic material
in Lagrangian coordinates. The \emph{material} point $x\in [0,1]$ has a \emph{volume strain}
$r(t,x)$ at time $t$ (that can also have negative values), and momentum (velocity) $p(t,x)$. The 
Eulerian position of the material point $x$, with respect to the position of the particle $0$,
is given by $q(t,x) = \int_0^x r(t,y) dy$,
so that we can identify the position of the material point $x=1$ as the total extension of the material:
\begin{equation}
  \label{eq:6}
  L(t) = q(t,1) = \int_0^1 r(t,y) dy.
\end{equation}


Let $T<\infty$ be given and arbitrary, and define $Q_T := [0,T] \times [0,1]$.
We shall construct weak solutions $\bar u(t,y) = \left(\bar r(t,y), \bar p(t,y)\right), (t, y)\in Q_T$,
to the quasilinear wave equation
such that $\bar u(t,\cdot) \in L^2(0,1)$ for all $t\le T$
and satisfy the initial and boundary conditions in the following weak sense:
\begin{equation} \label{eq:maineqr}
  \int_0^1\varphi(t,x)\bar r(t,x)dx - \int_0^1\varphi(0,x)r_0(x)dx =
  \int_0^t \int_0^1 \left(  \varphi_s \bar r-  \varphi_x\bar p \right) dx ds
\end{equation}
\begin{equation} \label{eq:maineqp}
  \int_0^1\psi(t,x)\bar p(t,x)dx - \int_0^1\psi(0,x)p_0(x) dx =
  \int_0^t \int_0^1    \left( \psi_s \bar p -  \psi_x \tau(\bar r) \right) dx ds
  + \int_0^t  \psi(s,1) \taubar(s) ds
\end{equation}
for all  functions $\varphi, \psi \in C^1(Q_T)$ 
such that  $\varphi(t,1)  = \psi(t,0) = 0$ for all $ t \ge 0$.

Define the \emph{free energy} of the system, associated to a profile $u(x) = (r(x), p(x))\in  L^2(0,1)$, as
\begin{equation}
\label{eq:freeenergy}
\mathcal F( u) := \int_0^1  \left( \frac{ p^2(x)}{2} + F( r(x))  \right) dx
\end{equation}
where $F(r)$ is a primitive of $\tau(r)$ ($F'(r) = \tau(r)$), such that
$ \dfrac{c_1}{2}r^2 \le F(r) \le \dfrac{c_2}{2} r^2$
for any $ r \in \mathbb R$. This is possible thanks to the bounds we required on $\tau'$.

The solution $\bar u$ of \eqref{eq:maineqp} that we obtain has the following properties:
\begin{itemize}
\item
$ \bar u\in  L^\infty(0,T; L^2(0,1))$ 
 \item $\bar u(0,x) =u_0(x)$ for a.e. $x$;
 \item 
   For any $\phi \in C^1([0,1])$, the application
\begin{equation}
t \mapsto \int_0^1 \phi(x) \bar u(t,x)dx
\end{equation}
is Lipschitz continuous over  $[0,T]$;
\item $\bar u$ satisfies \emph{Clausius inequality}:
\begin{equation} \label{eq:clausius}
\mathcal F(\bar u(t))- \mathcal F(u_0) \le  W(t), \qquad  
\forall t \in[ 0,T]
\end{equation}

where $u_0 = (r_0,p_0)$ and
\begin{equation} \label{eq:lavoro}
W(t) :=  -\int_0^t  \taubar'(s) \int_0^1 \bar r(s,x) dxds + \taubar(t) \int_0^1 \bar r(t,x)dx- \taubar(0) \int_0^1 r_0(x)dx
\end{equation}
is the work done by the external tension up to time $t$. In this sense we call our solution a
\emph{thermodynamic entropy solution}. For general discussion of the connection of such thermodynamic solutions
to the usual definition of entropic solutions, see \cite{evans2004survey} and \cite{ball2013entropy}.
\end{itemize}

\begin{oss}

If $\bar r(t,x)$ is differentiable with respect to time, we may perform an integration by parts and obtain
\begin{equation}
W(t) = \int_0^t \taubar(s) d L(s).
\end{equation}
This recovers the usual mechanical definition of the work.
\end{oss}

The construction of the solution is obtained from the following viscosity approximation
\begin{equation} \label{eq:paraintro}
\begin{cases}
r^\delta_t -p^\delta_x = \deltauno r^\delta_{xx}
\\
p^\delta_t-\tau(r^\delta)_x = \deltadue p^\delta_{xx}
\end{cases}, \qquad (t,x) \in \mathbb R_+ \times [0,1]
\end{equation}
with boundary conditions
\begin{equation}
p^\delta(t,0)=0, \quad \tau(r^\delta(t,1))=\taubar(t), \quad p_x^\delta(t,1) = 0, \quad r_x^\delta(t,0)=0
\end{equation}
and initial data
\begin{equation}
r^\delta(0,x) = r^\delta_0(x), \qquad p^\delta(0,x) = p^\delta_0(x)
\end{equation}
such that $r_0^\delta$ and $p_0^\delta$ are compatible with the boundary conditions, regular enough
(see \eqref{eq:vic1} and \eqref{eq:vic2}) and converge to $r_0$ and $p_0$, respectively, as $\delta \to 0$.

Note that in the viscous approximation we have added two Neumann boundary conditions,
that reflect the \emph{conservative} 
nature of the viscous perturbation. Under these conditions we have 
\begin{equation}
\int_0^1 |u^\delta(t,x)|^2dx  +
\delta \int_0^t \int_0^1| u_x^\delta(s,x)|^2 dx ds 
\le C,\qquad \forall t \ge 0
\end{equation}
where $C$ is independent of $t$ and $\deltauno$.
It is thus clear that $\{u^\delta\}_{\delta >0}$ and $\{\sqrt \delta u_x^\delta\}_{\delta >0}$
are uniformly bounded in $L^2(Q_T)$.
Then we rely on the existence of a family of bounded 
Lax entropy-entropy fluxes  as in \cite{Shearer1}, \cite{serre86} and \cite{SerreShearer},
that allows us to apply the compensated compactness in the 
$L^2$ version. The conditions assumed on $\tau(r)$ are in fact those required to apply \cite{Shearer1} results.
Under a slight different set of conditions, another $L^p$ extension of the compensated compactness
argument can be found in \cite{lin92}.

\subsection{Physical motivations}
\label{sec:physical-motivations}

The problem arises naturally considering hydrodynamic
limit for a non-linear chain of anharmonic oscillators
in contact with a heat bath at a given temperature \cite{MO1,MO2}.
This microscopic dynamics models an isothermal transformation with two locally
conserved quantities that evolve, on the macroscopic scale, following \eqref{eq:hyperintro}. 

Consider $N+1$ particles on the real line and, for $i=0, \dots N$, call $q_i$ and $p_i$
the positions and the momenta of the $i$-th particle, respectively. 
Particles $i$ and $i-1$ interact via a nonlinear potential $V(q_i-q_{i-1})$.
Particle $i=0$ is at position $q_0 = 0$ and does not move, i.e. $p_0(t) = 0$.
There is a time dependent force (tension) $\taubar(t)$ acting on the last particle.
Then, defining $r_i := q_i-q_{i-1}$ we have a system with Hamiltonian
\begin{equation}\label{eq:ham}
  H_N (t) = \sum_{i=1}^N \left( \frac{p_i^2}{2}+V(r_i) \right) - \taubar(t) \sum_{i=1}^N r_i.
\end{equation}


 The interaction with a heat bath at temperature $\beta^{-1}$ is modeled by a stochastic perturbation
 of the dynamics, that acts as a \emph{microscopic stochastic viscosity}.
 Defining the discrete gradient and laplacian as
 \begin{equation*}
	\nabla a_i = a_{i+1} - a_i, \qquad \Delta a_i = a_{i+1} + a_{i-1} - 2 a_i,
\end{equation*}
the evolution equations are then given by the following system of stochastic differential equations:
\begin{equation}\label{eq:sde}
\begin{cases}
d r_1 =  p_1 d t + \deltauno  \nabla V'(r_1) d t - \sqrt{2 \beta^{-1}\deltauno} \, d \widetilde w_1
\\
d r_i =  \nabla p_{i-1} d t + \deltauno \Delta V'(r_i) d t - \sqrt{2 \beta^{-1} \deltauno}\,
\nabla d\widetilde w_{i-1}, & 2 \le i \le N-1\\
d r_N =  \nabla p_{N-1} dt + \deltauno \left(\taubar(t) + V'(r_{N-1}) - 2V'(r_N)\right)
- \sqrt{2 \beta^{-1} \deltauno}\, \nabla d\widetilde w_{N-1},
\\
d p_1 =  \nabla V'(r_1) d t + \deltauno \left(p_2 - 2p_1\right) dt - \sqrt{2 \beta^{-1} \deltadue}
\,  \nabla dw_{1},\\
d p_j =  \nabla V'(r_j) d t +\deltauno \Delta p_j  d t - \sqrt{2 \beta^{-1} \deltadue}
\,  \nabla dw_{j-1}, & 2 \le j \le N-1
\\
d p_N = (\taubar(t)-V'(r_N)) d t -\deltadue \nabla p_{N-1}d t
+ \sqrt{2\beta^{-1}\deltadue }\, d  w_{N-1}
\end{cases}
\end{equation}
Here $\beta^{-1} >0$ is the temperature of the heat bath,
and $\{w_i\}_{i=1}^{N-1}$, $\{\widetilde w_i\}_{i=1}^{N-1}$ are
families of independent Brownian motions.
The parameter $\deltadue$ is the intensity of the action of the heat bath, and is
chosen depending on $N$ such that  $\deltadue \sim o(N)$. When $\deltadue = 0$,
equations \eqref{eq:sde} are just the Newton deterministic equations for the Hamiltonian  \eqref{eq:ham}.
Notice the correspondence of the boundary conditions in \eqref{eq:sde} with the one chosen in
\eqref{eq:paraintro}.

One of the effects of the action of the stochastic heat bath is to fix, in a large time scale, the variance of the velocities (i.e. the temperature) at $\beta^{-1}$, and establish a local equilibrium, where
space-time averages of $V'(r_i)$ around a macroscopic particle number $[Nx]$ at a macroscopic
time $Nt$ converges to the equilibrium tension $\tau(r(t,x),\beta)$ at temperature
$\beta^{-1}$ and volume  stretch $r(t,x)$. Since $\beta$ is fixed by the heat bath
and do not evolve in time, we drop it from the notation in the sequel.

The hydrodynamic limit consists in proving that, for any continuous function $G(x)$ on $[0,1]$, 
\begin{equation}
  \label{eq:4}
  \frac 1N \sum_{i=1}^N G\left(\frac iN\right) \begin{pmatrix} r_i(Nt) \\ p_i(Nt) \end{pmatrix}
  \ \mathop{\longrightarrow}_{N\to\infty} \ \int_0^1 G(x)  \begin{pmatrix} r(t,x) \\ p(t,x) \end{pmatrix}\; dx, 
\end{equation}
in probability, with $(r(t,x), p(t,x))$  satisfying \eqref{eq:maineqr}, \eqref{eq:maineqp}.
Of course a complete proof would require the uniqueness of such $L^2$ valued solutions that satisfy \eqref{eq:clausius}: this remains an open problem. 
The results contained in \cite{MO2} states that the limit distribution of the empirical distribution 
defined on the RHS of \eqref{eq:4}, concentrates on the possible solutions of \eqref{eq:maineqr}
and \eqref{eq:maineqp} that satisfy \eqref{eq:clausius}.
Since we have no uniqueness result, we cannot assure that the solutions constructed in the present paper coincide
with those obtained with the hydrodynamic limit from  \eqref{eq:sde}. One can however conjecture that this is the case.



This stochastic model was already considered by Fritz \cite{Fritz1} 
in the infinite volume without boundary conditions, and in \cite{MO1},
 but without the characterisation of the boundary conditions. 

 In the hydrodynamic limit only $L^2$ bounds are available and we are constrained to consider
 $L^2$ valued solutions. Since these solutions do not have definite values on the boundary,
 boundary conditions have only a dynamical meaning in the sense of an evolution in $L^2$ given by
 \eqref{eq:maineqr}, \eqref{eq:maineqp}.

\section{Hyperbolic system and the existence of weak solutions}
\label{sec:definition}

For $r,p : \mathbb R_+ \times [0,1] \to \mathbb R$, consider the hyperbolic system
\begin{equation} \label{eq:psystem}
\begin{cases}
r_t - p_x=0
 \\
p_t - \tau(r)_x=0
\end{cases}, \quad
\begin{matrix}
 p(t,0)=0 & r(t,1) = \tau^{-1}(\taubar(t))
\\
p(0,x)=p_0(x)& r(0,x)= r_0(x)
\end{matrix}
\end{equation}
  The nonlinearity $\tau\in C^3(\mathbb R)$ is chosen to have the following properties.
\begin{itemize}
\item[($\tau$-i)] $ c_1 \le \tau'(r) \le  c_2$ for some $c_1, c_2 >0$ and all $r \in \mathbb R$;
\item[($\tau$-ii)] $\tau''(r)  \neq 0$ for all $r \in \mathbb R$;
\item[($\tau$-iii)] $\tau''(r), \tau'''(r) \in L^2(\mathbb R)\cap L^\infty(\mathbb R)$.
\end{itemize}
We also assume that
$\taubar : \mathbb R_+ \to \mathbb R$ is smooth. Moreover, there is a time $T_\star$   such that $\taubar'(t) = 0$ for all $t \ge T_\star$. The initial data $r_0, p_0 \in L^2(0,1)$ are compatible with the boundary conditions.
\begin{oss}
Conditions ($\tau$-i) and ($\tau$-ii) ensure that the system is strictly hyperbolic and genuinely nonlinear, respectively. Condition ($\tau$-iii) is used later on to ensure some boundedness properties of the Lax entropies.
\end{oss}

\begin{theorem}\label{th-main}
  System \eqref{eq:psystem} admits a weak solution $\bar u = (\bar r, \bar p)$
  in the sense of \eqref{eq:maineqr} and \eqref{eq:maineqp}, such that
$\bar u\in  L^\infty(0,T; L^2(0,1))$, 
   $\bar u(0,x) =u_0(x)$ for a.e. $x$;
 and it  satisfies the Clausius inequality:
\begin{equation} 
\mathcal F(\bar u(t))- \mathcal F(u_0) \le  W(t), \qquad  \forall t \in [0,T] 
\end{equation}
with $W(t)$ as in \eqref{eq:lavoro}. Furthermore $\bar u$
satisfies the local Lax entropy condition in the sense
specified in section \ref{sec:lax}.
\end{theorem}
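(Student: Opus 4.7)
The plan is to construct $\bar u$ as the vanishing-viscosity limit of the parabolic approximation \eqref{eq:paraintro}. For each fixed $\delta>0$ I would first obtain a sufficiently regular solution $u^\delta=(r^\delta,p^\delta)$ by a standard Galerkin or fixed-point argument, exploiting the boundedness of $\tau'$ and the regularity of smoothed initial data $r_0^\delta,p_0^\delta$ compatible with all four boundary conditions. The essential a priori estimate is obtained by testing the first equation of \eqref{eq:paraintro} with $\tau(r^\delta)-\taubar(t)$ and the second with $p^\delta$, integrating on $[0,1]$, and using both the Dirichlet conditions $p^\delta(t,0)=0$, $\tau(r^\delta(t,1))=\taubar(t)$ and the Neumann conditions $r_x^\delta(t,0)=0$, $p_x^\delta(t,1)=0$ to kill every surface term. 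The resulting identity
\begin{equation*}
\frac{d}{dt}\Bigl(\mathcal F(u^\delta(t))-\taubar(t)\!\int_0^1\! r^\delta dx\Bigr)+\deltauno\!\int_0^1\!\tau'(r^\delta)(r_x^\delta)^2 dx+\deltadue\!\int_0^1\!(p_x^\delta)^2 dx=-\taubar'(t)\!\int_0^1\! r^\delta dx,
\end{equation*}
combined with the quadratic control $\tfrac{c_1}{2}r^2\le F(r)\le\tfrac{c_2}{2}r^2$, the smoothness of $\taubar$ and a Gr\"onwall step on $\int r^\delta\,dx$, yields a $\delta$-independent bound on $u^\delta$ in $L^\infty(0,T;L^2(0,1))$ and on $\sqrt\delta\,u_x^\delta$ in $L^2(Q_T)$.

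With these bounds I apply the $L^2$-valued compensated-compactness framework. Along a subsequence, $u^\delta$ generates an $L^2$ Young measure $\nu_{t,x}$. For every Lax entropy/entropy-flux pair $(\eta,q)$ constructed in \cite{Shearer1,SerreShearer}---whose existence together with the needed growth of $D\eta$, $D^2\eta$ is granted by assumptions ($\tau$-i)--($\tau$-iii), in particular by the $L^2\cap L^\infty$ integrability of $\tau'',\tau'''$---a direct chain-rule computation gives
\begin{equation*}
\partial_t\eta(u^\delta)+\partial_x q(u^\delta)=\delta\,\partial_x\bigl(D\eta(u^\delta)\cdot u_x^\delta\bigr)-\delta\,D^2\eta(u^\delta)(u_x^\delta,u_x^\delta).
\end{equation*}
The first right-hand term is $o(1)$ in $H^{-1}_{\mathrm{loc}}$, the second is bounded in $L^1_{\mathrm{loc}}$ with controlled sign when $\eta$ is convex, and the left-hand side is bounded in $W^{-1,\infty}_{\mathrm{loc}}$; an application of the Murat lemma places $\partial_t\eta(u^\delta)+\partial_x q(u^\delta)$ in a compact subset of $H^{-1}_{\mathrm{loc}}(Q_T)$. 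The div-curl lemma then yields the usual commutation relation for $\nu_{t,x}$, and Shearer's reduction argument forces $\nu_{t,x}$ to be a Dirac mass for a.e.\ $(t,x)$, producing strong convergence $u^\delta\to\bar u$ in $L^p(Q_T)$ for every $p\in[1,2)$.

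Armed with this strong convergence and the linear growth of $\tau$, I pass to the limit in the weak form of \eqref{eq:paraintro}. For test functions $\varphi,\psi\in C^1(Q_T)$ with $\varphi(t,1)=\psi(t,0)=0$, the Dirichlet condition $\tau(r^\delta(t,1))=\taubar(t)$ produces in the limit exactly the surface term $\int_0^t\psi(s,1)\taubar(s)ds$ in \eqref{eq:maineqp}, while the Neumann conditions guarantee that all remaining boundary contributions vanish in \eqref{eq:maineqr}. The viscous corrections $\delta\int_{Q_T}\varphi_x r_x^\delta$ and $\delta\int_{Q_T}\psi_x p_x^\delta$ tend to zero by Cauchy--Schwarz applied to the $\sqrt\delta$-bound on $u_x^\delta$. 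The Lipschitz regularity in time of $t\mapsto\int_0^1\phi(x)\bar u(t,x)dx$ is inherited from its counterpart for $u^\delta$, which is uniform in $\delta$ thanks to the flux control.

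To derive the Clausius inequality I integrate the energy identity in time, discard the nonnegative dissipation, rearrange the $\taubar$-terms into the form \eqref{eq:lavoro}, and pass to the limit using weak-$L^2$ lower semicontinuity of $\mathcal F$ together with the strong convergence of $r^\delta$. The local Lax entropy condition is extracted in the standard way: for each convex Lax pair $(\eta,q)$ and any nonnegative $\phi\in C_c^\infty((0,T)\times(0,1))$, the viscous identity becomes an inequality modulo $O(\delta)$ terms that vanish in the limit, while strong $L^p$ convergence handles the $\eta(u^\delta),q(u^\delta)$ parts. The main obstacle in the whole argument is the Young-measure reduction in $L^2$ without any $L^\infty$ a priori bound: one must carefully control the growth of the Lax entropies and their derivatives so that both the $H^{-1}$-compactness and the div-curl commutation apply within $L^2$, which is precisely where assumption ($\tau$-iii) and the technology of \cite{Shearer1,SerreShearer,lin92} enter in an essential way.
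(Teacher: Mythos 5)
Your proposal tracks the paper's strategy very closely: vanishing-viscosity approximation with the extra Neumann conditions, a free-energy estimate giving uniform $L^2$ bounds on $u^\delta$ and $\sqrt\delta\,u^\delta_x$, $L^2$-valued Young measures together with Shearer's bounded half-plane--supported Lax entropy--entropy-flux pairs, Tartar--Murat and the div-curl lemma to reduce the limit Young measure to a Dirac mass, strong $L^p$ convergence for $p<2$, passage to the limit in the weak form, and the Clausius inequality via lower semicontinuity of the free energy combined with the strong convergence of $r^\delta$. The energy identity you obtain by testing the first equation with $\tau(r^\delta)-\taubar$ and the second with $p^\delta$ is, up to the factor $\tau'(r^\delta)$ in the viscous dissipation, exactly \eqref{eq:preclausius}; and your use of ($\tau$-iii) to guarantee boundedness of the entropies and their derivatives, and hence $W^{-1,\infty}$ control plus $H^{-1}$-compactness, is the same argument the paper runs.

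The one step you dispatch too quickly is the Lipschitz time-continuity of $t\mapsto\int_0^1\phi(x)\,\bar u(t,x)\,dx$ for a general $\phi\in C^1([0,1])$, which is what makes $\bar u(t,\cdot)$ well defined for \emph{every} $t$ (and so $\bar u(0,\cdot)=u_0$ a.e.\ and a pointwise-in-$t$ Clausius inequality). You say this is ``inherited from its counterpart for $u^\delta$ thanks to the flux control,'' but the naive computation does not close: differentiating $\int_0^1\phi\,p^\delta\,dx$ and integrating by parts produces boundary terms such as $\phi(0)\tau(r^\delta(t,0))$ from the flux and $\delta\,\phi(0)\,p^\delta_x(t,0)$ from the viscosity, and neither is controlled uniformly in $\delta$ by the energy estimate --- only $p^\delta_x(t,1)=0$, $r^\delta_x(t,0)=0$, $p^\delta(t,0)=0$ and $\tau(r^\delta(t,1))=\taubar(t)$ are known at the boundary. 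The paper handles this by representing $(r^\delta,p^\delta)$ via the Green's function (Duhamel/Fourier) of the heat operator with exactly these mixed Dirichlet--Neumann conditions; the boundary data then enter through well-defined trace terms, and a Parseval computation gives a $\delta$-uniform Lipschitz bound with constant depending only on $\|\phi'\|_{L^2}$, $\|\tau(r^\delta)\|_{L^\infty(0,T;L^2)}$, $\|\taubar\|_\infty$ and $\|\sqrt\delta\,\partial_x u_0^\delta\|_{L^2}$. Either reproduce that Green's-function estimate or supply an alternative uniform trace control; as written, the regularity claim and hence the $\forall t$ form of the Clausius inequality are not yet justified.
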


\section{Viscous approximation and energy estimates}
\label{sec:visc-appr}
We consider the following parabolic approximation of the hyperbolic system \eqref{eq:psystem}
\begin{equation}
  \label{eq:vpsystem}
\begin{cases}
r^\delta_t- p^\delta_x= \deltauno  r^\delta_{xx} 
 \\
p^\delta_t-  \tau(r^\delta)_x = \deltadue  p^\delta_{xx}
\end{cases}, \qquad(t,x) \in \mathbb R_+ \times [0,1]
\end{equation}
for $\deltauno > 0$, with the boundary conditions:
\begin{equation}\label{eq:vbc}
 p^\delta(t,0)=0, \quad r^\delta(t,1) = \tau^{-1}(\taubar(t)),
\quad   p^\delta_x(t,1)= 0, \quad    r^\delta_x(t,0)= 0,
\end{equation}
and initial data:
\begin{equation}
p^\delta(0,x) =p_0^\delta(x), \quad r^\delta(0,x)=r_0^\delta(x).\label{eq:vic}
\end{equation}
The initial data $r^\delta_0, p^\delta_0 \in C^\infty([0,1])$ are mollifications of $r_0$ and $p_0$
compatible with the boundary conditions:
\begin{equation}
  p_0^\delta(0) = 0 \quad r_0^\delta(1) = \tau^{-1}(\taubar(0)), \quad \partial_x p_0^\delta(1) =0 \quad \partial_ x r_0^\delta(0) = 0.
  \label{eq:vic1}
\end{equation}
Moreover,  there is $C$ independent of $\delta$ such that
\begin{equation}\label{eq:vic2}
\|r_0^\delta \|_{L^2}+\|p_0^\delta \|_{L^2}+\|\sqrt \delta \partial_x r_0^\delta \|_{L^2}.
+ \| \sqrt \delta \partial_x p_0^\delta \|_{L^2} \le C
%
\end{equation}
and $(r_0^\delta, p_0^\delta) \to (r_0, p_0)$ strongly in  $L^2(0,1)$.

As shown in \cite{AlasioMarchesani} in a more general setting, this system admits a global classical solution $(r^\delta, p^\delta)$, with 
$$ r^\delta, p^\delta \in C^1(\mathbb R_+ ; C^0([0,1]) )\cap C^0(\mathbb R_+; C^2([0,1])).
$$
\begin{oss}
\begin{itemize}
\item[(i)] We added two extra Neumann conditions, namely $p_x^\delta(t,1)=r_x^\delta(t,0)=0$.
  These conditions reflect the conservative nature of the viscous perturbation, and
  are required in order to obtain the correct production of free energy. 
  
\item[(ii)] One could introduce a \emph{nonlinear} viscosity term: $\deltauno \tau(r^\delta)_{xx}$ in place of $\deltauno r^\delta_{xx}$.
  This is a term which comes naturally from a microscopic derivation of system \eqref{eq:vpsystem}, as described in the introduction (see also \cite{MarchesaniDiffusive}). Nevertheless, this does not  drastically change the problem, thus we shall consider only the linear viscosity
  $\deltauno  r^\delta_{xx}$.
\end{itemize}
\end{oss}




\begin{theorem}[Energy estimate] \thlabel{prop:energy}
There there is a constant $C>0$ independent of $t$
and $\delta$ such that
\begin{equation}
\int_0^1 |u^\delta(t,x)|^2 dx +
\delta \int_0^t \int_0^1 |u_x^\delta(s,x)|^2 dx ds  \le C
\end{equation}
for all $t \ge 0 $ and $\delta >0$.
\end{theorem}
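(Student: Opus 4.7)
The natural strategy is to test the system against $(\tau(r^\delta), p^\delta)$ to produce an evolution equation for the free energy $\mathcal F(u^\delta)$, then absorb the boundary work into a modified functional. Concretely, I would introduce
\[
\mathcal G(u^\delta(t)) := \mathcal F(u^\delta(t)) - \taubar(t)\int_0^1 r^\delta(t,x)\,dx,
\]
and differentiate in time, using \eqref{eq:vpsystem}. After substituting the equations and integrating by parts the spatial derivatives in the flux and viscous terms, the boundary contributions are of two kinds: the flux boundary term $\bigl[p^\delta\tau(r^\delta)\bigr]_0^1 = p^\delta(t,1)\taubar(t)$ (using $p^\delta(t,0)=0$ and $\tau(r^\delta(t,1))=\taubar(t)$), and the viscous boundary terms, which reduce to $\delta\,\taubar(t)\,r^\delta_x(t,1)$ since the other three Neumann/Dirichlet contributions vanish. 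Both of these are exactly the terms produced when differentiating $-\taubar(t)\int_0^1 r^\delta\,dx$ through the first equation of \eqref{eq:vpsystem}. The result is the clean identity
\[
\frac{d}{dt}\mathcal G(u^\delta(t)) + \delta\int_0^1\!\bigl((p^\delta_x)^2+\tau'(r^\delta)(r^\delta_x)^2\bigr)dx = -\taubar'(t)\int_0^1 r^\delta(t,x)\,dx.
\]

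Next I would show that $\mathcal G$ controls $\mathcal F$. By ($\tau$-i) and the choice of $F$, $\|r^\delta\|_{L^2}^2 \le (2/c_1)\mathcal F(u^\delta)$; combining with Cauchy–Schwarz and Young's inequality applied to $\taubar(t)\int r^\delta\,dx$, one obtains a bound of the form $\mathcal F(u^\delta)\le 2\,\mathcal G(u^\delta)+C_{\taubar}$ for a constant depending only on $\|\taubar\|_\infty$ and $c_1$. Using this in the right-hand side above, again via Young, yields a differential inequality
\[
\frac{d}{dt}\mathcal G(u^\delta(t)) \le C_1 + C_2\,\mathcal G(u^\delta(t)),
\]
valid on $[0,T_\star]$, with $C_1,C_2$ depending only on $\|\taubar\|_\infty$, $\|\taubar'\|_\infty$ and $c_1$. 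Gronwall's lemma and the uniform bound on $\mathcal G(u^\delta(0))$ coming from \eqref{eq:vic2} then give $\sup_{[0,T_\star]}\mathcal G(u^\delta(t))\le C$ independently of $\delta$. For $t\ge T_\star$ the source term vanishes because $\taubar'\equiv 0$, so $\mathcal G(u^\delta(t))$ is actually non-increasing; hence the bound extends to all $t\ge 0$, and translating back we get $\mathcal F(u^\delta(t))\le C$ uniformly, which yields the desired $L^2$ bound on $u^\delta(t,\cdot)$.

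Finally, for the parabolic dissipation, I would integrate the identity in time on $[0,t]$, using $\tau'(r^\delta)\ge c_1$ to estimate
\[
\delta\int_0^t\!\!\int_0^1\bigl((p^\delta_x)^2+c_1(r^\delta_x)^2\bigr)dx\,ds \le \mathcal G(u^\delta(0))-\mathcal G(u^\delta(t)) + \int_0^t|\taubar'(s)|\,\|r^\delta(s,\cdot)\|_{L^2}\,ds,
\]
and the right-hand side is already controlled by the previous step (the last integral vanishes for $s\ge T_\star$). This produces the announced bound on $\delta\int_0^t\!\int_0^1|u^\delta_x|^2\,dx\,ds$ uniformly in $\delta$ and in $t$.

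The only delicate point is the bookkeeping of boundary terms: one must use all four boundary conditions in \eqref{eq:vbc} simultaneously and notice the cancellation between $\delta\,\taubar(t)\,r^\delta_x(t,1)$ and the contribution of $\taubar\int_0^1 r^\delta_t\,dx$ coming from $\frac{d}{dt}\bigl[\taubar\int r^\delta\bigr]$. Once this cancellation is in place, the rest is standard Gronwall on a bounded time interval, combined with the monotonicity of $\mathcal G$ after $T_\star$ to obtain bounds independent of $t$.
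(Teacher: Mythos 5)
Your proof is correct, and it proves the same statement by the same underlying mechanism: a free-energy identity in which all four boundary conditions in \eqref{eq:vbc} conspire to kill the viscous and flux boundary terms, a Young-inequality absorption of the boundary work $\taubar\int r^\delta$, a Gronwall argument on $[0,T_\star]$, and the observation that $\taubar'\equiv 0$ after $T_\star$ settles the uniformity in $t$. The organizational difference is real, though: you package the boundary work into a modified functional $\mathcal G = \mathcal F - \taubar\int r^\delta$, obtain a clean pointwise-in-time differential inequality $\frac{d}{dt}\mathcal G + \text{(dissipation)} = -\taubar'\int r^\delta$, and apply Gronwall directly to $\mathcal G$; the paper first writes the identity in integral form for $\mathcal F$ over $[0,T]$, integrates the source $\int\taubar\, \partial_t(\int r^\delta)$ by parts in time, and then runs Gronwall on a composite quantity $J(t)$ that lumps together the instantaneous $L^2$ norm and the accumulated dissipation. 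Your version also handles the regime $t\ge T_\star$ slightly more structurally (monotonicity of $\mathcal G$) where the paper instead reuses the $[0,T_\star]$ bound inside a Cauchy--Schwarz estimate; both amount to the same observation. The one detail worth keeping an eye on: in your identity the dissipation reads $\delta\int\bigl((p^\delta_x)^2+\tau'(r^\delta)(r^\delta_x)^2\bigr)$, and you should say once that ($\tau$-i) gives $\tau'\ge c_1$ so this dominates $c_1\delta\int|u^\delta_x|^2$ up to a constant; you do this in your last step, which is fine. Net: same approach, cleaner bookkeeping on your end, no gap.
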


\begin{proof}
Let $F$ be a primitive of $\tau$ such that $ \dfrac{c_1}{2} r^2 \le F(r) \le \dfrac{c_2}{2} r^2$. By a direct calculation we have
\begin{align} \label{eq:preclausius}
 \int_0^1 \left( \frac{(p^\delta)^2}{2}+ F(r^\delta) \right) dx  \biggr |_{t=0}^{t=T} +\int_0^T \int_0^1 \left( \deltauno (r^\delta_x)^2 + \deltadue (p_x^\delta)^2 \right) dx dt  = \int_0^T \taubar(t)  \int_0^1 r_t^\delta dx dt 
\\
=  \left( \taubar(t) \int_0^1 r^\delta dx \right) \biggr |_{t=0}^{t=T} - \int_0^T \taubar'(t) \int_0^1 r^\delta dx  dt.
\end{align}
Write, for some $\varepsilon > 0$ to be chosen later,
\begin{equation}
\taubar(T) \int_0^1 r^\delta(T,x)dx  \le |\taubar(T)|  \left( \frac{1}{2\varepsilon} + \frac{\varepsilon}{2}\int_0^1(r^\delta)^2(T,x) dx\right) \le \frac{C_\taubar}{2\varepsilon}+ \frac{C_\taubar \varepsilon}{2} \int_0^1 (r^\delta)^2(T,x)dx
\end{equation}
where $C_\taubar = \sup_{t \ge 0} \left( |\taubar(t)|+ |\taubar'(t)| \right)$ depends on $\taubar$ only.

Using $F(r) \ge \dfrac{c_1}{2} r^2$  we obtain
\begin{align}
 \left(\frac{c_1}{2}- \frac{C_\taubar \varepsilon}{2}\right) \int_0^1 (r^\delta)^2(T,x)dx + \frac{1}{2}\int_0^1 (p^\delta)^2(T,x)dx &+\int_0^T \int_0^1 \left( \deltauno (r^\delta_x)^2 + \deltadue (p_x^\delta)^2 \right) dx  dt \nonumber
\\
& \le \frac{C_\taubar}{2\varepsilon} + C_0 - \int_0^T \taubar'(t) \int_0^1 r^\delta(t,x) dx dt. 
\end{align} 
Recall that there is $T_\star >0$ such that $\taubar'(t) = 0$ for $t \ge T_\star$. Then, for $T < T_\star$, we write
\begin{align}
 \left(\frac{c_1}{2}- \frac{C_\taubar \varepsilon}{2}\right) \int_0^1 (r^\delta)^2(T,x)dx &+ \frac{1}{2}\int_0^1 (p^\delta)^2(T,x)dx +\int_0^T \int_0^1 \left( \deltauno (r^\delta_x)^2 + \deltadue (p_x^\delta)^2 \right) dx  dt  \nonumber
\\
& \le  \frac{C_\taubar}{2\varepsilon} + C_0 + \frac{C_\taubar^2}{2} T + \frac{1}{2} \int_0^T \int_0^1 (r^\delta)^2(t,x) dx 
\end{align}
\begin{align}
\le  \frac{C_\taubar}{2\varepsilon} + C_0 + \frac{C_\taubar^2}{2} T &+ \frac{1}{2} \int_0^T \int_0^1\left( (r^\delta)^2(t,x) +(p^\delta)^2(t,x) \right)dx dt 
\\
&+ \frac{1}{2}\int_0^T \int_0^t \int_0^1 \left( \deltauno (r^\delta_x)^2 + \deltadue (p_x^\delta)^2 \right) dxds dt \nonumber
\end{align}
where $C_0$ depends on the initial data only. Choosing $\varepsilon = c_1/(2C_\taubar)$ gives
\begin{equation}
\frac{c_1}{4} J(T) \le C_0 +\frac{C_\taubar^2}{c_1}+ \frac{C_\taubar^2}{2}T + \frac{1}{2} \int_0^T J(t) dt,
\end{equation}
where
\begin{equation}
J(t) =  \int_0^1 \left( (r^\delta)^2(t,x)+(p^\delta)^2(t,x)\right)dx + \int_0^t \int_0^1 \left(\deltauno (r^\delta_x)^2 + \deltadue (p_x^\delta)^2 \right) dx ds.
\end{equation}
We apply Gronwall's inequality. This, together with $T < T_\star$, gives
\begin{align}
J(T)  & \le  \frac{4c_1 C_0+ 2C_\taubar^2(2+  c_1 T)}{c_1^2} \exp \left(\frac{2T}{c_1} \right) \nonumber 
\\
&  \le   \frac{4c_1 C_0+ 2C_\taubar^2(2+  c_1 T_\star)}{c_1^2} \exp \left(\frac{2T_\star}{ c_1} \right) := C_0(c_1, \taubar),
\end{align}
for  all $T \in [0,T_\star)$, where $C_0(c_1, \taubar)$ is independent of $T$ and $\delta$

On the other hand, if $T \ge T_\star$, we have
\begin{align}
 \left(\frac{c_1}{2}- \frac{C_\taubar \varepsilon}{2}\right) \int_0^1 (r^\delta)^2(T,x)dx& + \frac{1}{2}\int_0^1 (p^\delta)^2(T,x)dx +\int_0^T \int_0^1 \left( \deltauno(r^\delta_x)^2 + \deltadue (p_x^\delta)^2 \right) dx  dt \nonumber
\\
& \le \frac{C_\taubar}{2\varepsilon} + C_0 - \int_0^{T_\star} \taubar'(t) \int_0^1 r^\delta(t,x) dx dt
\end{align}
and the integral at the right-hand side is uniformly bounded in $T$, $\deltauno$ and $\deltadue$, since
\begin{align}
- \int_0^{T_\star} \taubar'(t) \int_0^1 r^\delta(t,x) dx dt & \le C_\taubar \int_0^{T_\star} \int_0^1 |r^\delta(t,x)| dx dt \nonumber
\\
& \le C_\taubar T_\star \left( \frac{1}{T_\star} \int_0^{T_\star} \int_0^1 (r^\delta)^2(t,x)dxdt \right)^{1/2} \nonumber
\\
& \le C_\taubar \sqrt{T_\star} \left( \int_0^{T_\star} J(t) dt \right)^{1/2} \nonumber
\\
& \le C_\taubar T_\star \sqrt{C_0(c_1, \taubar)} 
\end{align}
\end{proof}

From \eqref{eq:preclausius} we also immediately obtain the following
\begin{cor}[Viscous Clausius inequality] \thlabel{prop:clausius}
\begin{equation}
\mathcal F(u^\delta(t)) - \mathcal F(u_0^\delta) \le - \int_0^t \taubar'(s) \int_0^1 r^\delta (s,x)dx + \taubar(t) \int_0^1 r^\delta(t,x) dx - \taubar(0)\int_0^1 r_0^\delta(x)dx.
\end{equation}
\end{cor}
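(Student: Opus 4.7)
The proof is essentially immediate from the identity \eqref{eq:preclausius} that was already derived inside the proof of \thref{prop:energy}. My plan is simply to read off the Clausius inequality by dropping the non-negative viscous dissipation term.

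\textbf{Plan.} I would start from the energy identity
\begin{equation*}
\mathcal F(u^\delta(t)) - \mathcal F(u_0^\delta) + \int_0^t \int_0^1 \left( \deltauno (r^\delta_x)^2 + \deltadue (p^\delta_x)^2 \right) dx\, ds = \taubar(t)\int_0^1 r^\delta(t,x)dx - \taubar(0)\int_0^1 r_0^\delta(x)dx - \int_0^t \taubar'(s)\int_0^1 r^\delta(s,x)dx\, ds,
\end{equation*}
which is exactly the restatement of \eqref{eq:preclausius} once we recognize that $\int_0^1\bigl((p^\delta)^2/2 + F(r^\delta)\bigr)dx$ is precisely $\mathcal F(u^\delta)$ and rewrite the right-hand side $\int_0^t \taubar(s)\int_0^1 r_s^\delta\, dx\, ds$ via integration by parts in time. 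Since $\deltauno,\deltadue>0$, the dissipation integral on the left is non-negative, so we may discard it at the cost of an inequality, obtaining the stated bound.

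\textbf{On the main obstacle.} There is essentially none: the entire content is in the identity \eqref{eq:preclausius}, which was already proved when establishing the energy estimate (via multiplying the first equation of \eqref{eq:vpsystem} by $\tau(r^\delta)$, the second by $p^\delta$, integrating on $[0,1]$, integrating by parts in $x$, and exploiting the compatibility of the boundary conditions \eqref{eq:vbc}—in particular $p^\delta(t,0)=0$, $p^\delta_x(t,1)=0$, $r^\delta_x(t,0)=0$ and $\tau(r^\delta(t,1))=\taubar(t)$—so that all boundary contributions cancel except the one producing $\taubar(t)\int_0^1 r^\delta_t\, dx$). Thus the corollary is merely a reformulation of that identity after an integration by parts in $s$ on the work term, followed by dropping the non-negative entropy production.
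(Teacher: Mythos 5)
Your proof is correct and follows exactly the same route as the paper: the corollary is stated immediately after \eqref{eq:preclausius} precisely because it follows by recognizing the left side as $\mathcal F(u^\delta(t))-\mathcal F(u_0^\delta)$ plus the non-negative dissipation integral, then dropping the latter. Nothing further is needed.
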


\section{$L^2$ Young measures and compensated compactness}
\label{sec:young}

Throughout this section, for any fixed $T >0$ let $u^\delta(t,x) := (r^{\delta}(t,x), p^{\delta}(t,x))$ be a strong solution of \eqref{eq:vpsystem} on $Q_T$.
 By  \thref{prop:energy} and after a time integration over $[0,T]$ we obtain
 \begin{equation}\label{eq:l2b}
 \| u^\delta \|_{L^2(Q_T)} \le C
 \end{equation}
 for some $C$ independent of $\delta$. Thus  we can extract from $\{u^\delta\}_{\delta >0}$
 a subsequence that is weakly convergent in $L^2(Q_T)$.
Namely, up to a subsequence, there exists
$\bar u = (\bar r, \bar p) \in L^2(Q_T)$ such that
 \begin{equation} \label{eq:weakL2}
   \lim_{\delta \to 0} \int_{Q_T} u^\delta \varphi  = \int_{Q_T} \bar u \varphi ,
   \qquad \forall  \varphi \in  L^2(Q_T).
 \end{equation}
 All the limits $\delta\to 0$ taken below are intended along a chosen subsequence.

In this section we want to show that for any $\phi \in L^2(Q_T)$ we have
\begin{equation} \label{eq:nonlinearlimit}
\lim_{\delta \to 0}\int_{Q_T}  \phi(t,x) \tau(r^\delta(t,x) ) \; dx\; dt  =\int_{Q_T}   \phi(t,x)  \tau(\bar r(t,x)) \; dx\; dt.
\end{equation}
This is done using a $L^2$ version of the compensated compactness,
which is usually performed in $L^\infty$.

From the solution $u^\delta(t,x)$, we define the following Young measure on $Q_T \times \mathbb R^2$:
\begin{equation}
\nu_{t,x}^\delta:=  \delta_{u^\delta(t,x)},
\end{equation}
which is a Dirac mass centred at $u^\delta$, i.e.
\begin{equation*}
  \int_{Q_T} J(t,x) f(u^\delta(t,x)) \; dx\; dt  =
  \int_{Q_T} \int_{\mathbb R^2} J(t,x) f(\xi)d \nu^\delta_{t,x}(\xi)dxdt 
\end{equation*}
for all mesurable $J: Q_T \to \mathbb R$ and $f: \mathbb R^2 \to \mathbb R$.

Since we have $L^2$ bounds on $u^\delta$, we refer at $\nu^\delta_{t,x}$ as  a $L^2$-Young measure
\cite{ball1989version}.
In particular we have, from \eqref{eq:l2b}
\begin{equation}
  \label{eq:5}
  \int_{Q_T} \int_{\mathbb R^2} |\xi|^2 d \nu^\delta_{t,x}(\xi)\; dx\; dt \ \le C.
\end{equation}

We call $\mathcal Y$ the set of Young measures on $Q_T \times \mathbb R^2$
and we make it a metric space by endowing it with the Prohorov's metric.
By proposition 4.1 of \cite{StochasticYoung}, the set
\begin{equation}\label{eq:compact}
K_C := \left \{ \nu \in \mathcal Y : \int_{Q_T} \int_{\mathbb R^2} |\xi|^2 d \nu_{t,x}(\xi) dx dt \le C \right\}
\end{equation}
is compact in $\mathcal Y$.
Then, by the fundamental theorem for Young measures (\cite{ball1989version}, section 2),
there exists $\bar \nu_{t,x} \in \mathcal Y$
so that, up to a subsequence,
\begin{equation}
\label{eq:nuconv}
\lim_{\delta \to 0} \int_{Q_T} \int_{\mathbb R^2} J(t,x) f(\xi)d \nu^\delta_{t,x}(\xi)dxdt = \int_{Q_T}
\int_{\mathbb R^2} J(t,x) f(\xi)d \bar \nu_{t,x}(\xi)dxdt
\end{equation}
for all continuous and \emph{bounded} $J: Q_T \to \mathbb R$
and $f: \mathbb R^2 \to \mathbb R$.
We shall simply write $\nu^\delta \to \bar \nu$ in place of \eqref{eq:nuconv}.
By a simple adaptation of
proposition 4.2 of \cite{StochasticYoung}, \eqref{eq:nuconv} can be extended
to a function $f: \mathbb R^2\to \mathbb R$
such that $f(\xi)/|\xi|^2 \to 0$ as $|\xi| \to +\infty$.

In order 
to obtain \eqref{eq:nonlinearlimit},
  we need to prove that
  the limit Young measure $\bar \nu$ is a Dirac mass: $\bar \nu_{t,x} = \delta_{\bar u(t,x)}$,
  for some  $\bar u \in L^2(Q_T)$ and for almost  every $(t,x) \in Q_T$.
  This is done using the classical argument by Tartar and Murat.
\begin{defn}
A Lax entropy-entropy flux pair for system \eqref{eq:psystem} is a couple of differentiable functions $(\eta, q) : \mathbb R^2 \to \mathbb R^2$ such that
\begin{equation} \label{eq:laxsystem}
\begin{cases}
\eta_r + q_p =0
\\
\tau'(r) \eta_p + q_r= 0
\end{cases}.
\end{equation}
\end{defn}
We show that Tartar's equation holds for any two suitable entropy pairs $(\eta, q)$ and $(\eta', q')$ to be specified below and almost all $(t,x) \in Q_T$:
\begin{equation} \label{eq:tartar}
  \langle\eta q' -\eta' q, \bar \nu_{t,x} \rangle =
  \langle \eta, \bar \nu_{t,x} \rangle \langle q', \bar \nu_{t,x} \rangle-
  \langle \eta',  \bar \nu_{t,x} \rangle  \langle q,  \bar \nu_{t,x} \rangle,
\end{equation}
where
\begin{equation}
\langle f, \bar \nu_{t,x} \rangle := \int_{\mathbb R^2} f(\xi) d \bar\nu_{t,x}(\xi)
\end{equation}
for any measurable $f$. We employ the following argument due to Shearer \cite{Shearer1}.


Accordingly to  Lemma 2 in \cite{Shearer1}, there exists a family of half-plain supported entropy-entropy fluxes $(\eta,q)$
such that $\eta$ and $q$ are bounded together with their first and second derivatives. These are explicitly given as follows. We define $z(r) := \int_0^r \sqrt{\tau'(\rho)}d \rho$ and we define the Riemann coordinates $w_1 = p + z$, $w_2 = p-z$. We also pass from the dependent variables $\eta$, $q$ to $H$,$Q$ as follows:
\begin{align}
	\eta = \frac{1}{2}(\tau')^{-1/4}\left(H+Q\right)
	\\
	q = \frac{1}{2}(\tau')^{+1/4}\left(H-Q\right)
\end{align}
so that \eqref{eq:laxsystem} becomes
\begin{equation} \label{prova}
\begin{cases}
	H_{w_1}= a Q
	\\
	H_{w_2}  = - a Q
\end{cases},
\end{equation}
where
\begin{equation}
	a(w_1-w_2) = \frac{\tau''\left( r \left(\dfrac{w_1-w_2}{2}\right)\right)}{8 \left(\tau' \left(r\left(\dfrac{w_1-w_2}{2}\right)\right)\right)^{3/2}}.
\end{equation}
Then we fix $\bar w_1, \bar w_2 \in \mathbb R$ and we solve \eqref{prova} with Goursat data given on the lines $w_1 = \bar w_1$ and $w_2 = \bar w_2$:
\begin{align}
\begin{split}
H(\bar w_1, w_2)& = g(w_2)
\\
Q(w_1,\bar w_2) &= 0,
\end{split}
\end{align}
where $g$ is continuous and compactly supported. Then one can explicitly solve \eqref{prova} and get
\begin{align}
\begin{split}
H(w_1,w_2) &= g(w_2)+ \sum_{n=1}^\infty (\mathcal A^n g)(w_1,w_2)
\\
Q(w_1,w_2) &= - \int_{\bar w_2}^{w_2} a(w_1- v)H(w_1,v)dv,
\end{split}
\end{align}
where the operator $\mathcal A$ acts on functions $f \in L^1_{loc}(\mathbb R^2)$ as follows:
\begin{equation}
(\mathcal A f)(w_1,w_2) = -\int_{\bar w_1}^{w_1} \int_{\bar w_2}^{w_2}a(v-w_2)a(v-u)f(v,u)du dv.
\end{equation}
Finally, going back to $\eta$ and $q$ and using our assumptions on $\tau$ it is possible to show (\cite{Shearer1}, Lemma 2) that $\eta$ and $q$ are bounded, together with their first and second derivatives.

Now we have a suitable family of entropy-entropy flux pair, we use Tartar-Murat Lemma in order to derive Tartar's equation \eqref{eq:tartar}. We evaluate $(\eta, q)$ along the approximate solutions $u^\delta$ and compute the entropy production:
\begin{equation} \label{eq:laxProduction}
 \eta(u^\delta)_t+ q(u^\delta)_x= \delta \left(  \eta_r   r_x^\delta+   \eta_p  p_x^\delta \right)_x- \delta \left(  \eta_{rr} (r_x^\delta)^2+ \eta_{pp} (p_x^\delta)^2 +
 2 \eta_{rp}  r_x^\delta p_x^\delta\right)
\end{equation}
Since $\eta_r$ and $\eta_p$ are bounded and $\sqrt{\deltadue} r_x^\delta, \sqrt{\deltauno} p_x^\delta$
are bounded in $L^2(Q_T)$, we have 
\begin{equation}\label{eq:h-1}
\lim_{\delta \to 0}\delta \left(  \eta_r   r_x^\delta+   \eta_p  p_x^\delta \right)_x = 0 \qquad \text{in } H^{-1}(Q_T),
\end{equation}
 while
\begin{equation}
\left \| \deltauno \left( \eta_{rr} (r_x^\delta)^2+ \eta_{pp} (p_x^\delta)^2 +
2 \eta_{rp}  r_x^\delta p_x^\delta\right)  \right\|_{L^1(Q_T)} \le C
\end{equation}
uniformly with respect to 
 $\delta$.
 Thus we have have obtained  an equality of the form
 $$
 \eta(u^\delta)_t + q(u^\delta)_x =  \chi^\delta + \psi^\delta,
 $$
 where $\{\chi^\delta \}_{\delta >0}$ lies in a compact set of $H^{-1}(Q_T)$ and $\{\psi^\delta\}_{\delta >0}$ is bounded in $L^1(Q_T)$. Moreover, since $\eta$ and $q$ are bounded, $\{\eta(u^\delta)_t + q(u^\delta)_x\}_{\delta >0}$ is bounded in $W^{-1,p}(Q_T)$ for some $p >2$.
 
Therefore, we  can apply Tartar-Murat and the div-curl lemma (cf \cite{dafermos2010hyperbolic}, Theorem 16.2.1 and Lemma 16.2.2) and obtain Tartar's equation \eqref{eq:tartar}.

The final step is to use Tartar's equation to prove that the support of the limit Young measure $\bar \nu_{t,x}$ is a point. This is done in lemmas 4 to 7 of \cite{Shearer1} and leads to the following
\begin{prop}
There exists a $\bar u \in L^2(Q_T)$ such that $\bar \nu_{t,x} = \delta_{\bar u(t,x)}$ for almost all $(t,x) \in Q_T$. Moreover, $u^\delta \to \bar u$ strongly in $L^p(Q_T)$ for any $p \in [1,2)$.
\end{prop}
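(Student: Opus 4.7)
The plan is to follow the Tartar--Murat--Shearer scheme: use Tartar's equation \eqref{eq:tartar} tested against the half-plane supported Shearer entropies built above to force the limit Young measure to collapse to a Dirac mass, then upgrade this to strong $L^p$ convergence via the uniform $L^2$ bound. Fix $(t,x)\in Q_T$ outside a null set where \eqref{eq:tartar} holds and write $\mu := \bar\nu_{t,x}$, a probability measure on $\mathbb R^2$ with $\int|\xi|^2\, d\mu<\infty$ by \eqref{eq:5}.

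Passing to Riemann coordinates $w_1 = p + z(r)$, $w_2 = p - z(r)$, I would view $\mu$ as a measure on the $(w_1,w_2)$-plane. The Goursat construction \eqref{prova} yields, for each pair of parameters $(\bar w_1,\bar w_2)$ and each continuous compactly supported datum $g$, an entropy pair $(\eta,q)$ whose $(w_1,w_2)$-support is contained in a half-plane determined by $(\bar w_1, \bar w_2)$ and which is bounded together with its first and second derivatives. Substituting two such pairs into \eqref{eq:tartar} yields relations among moments of $\mu$ restricted to the corresponding strips. Following Shearer's Lemmas~4--7, and varying $\bar w_1, \bar w_2$ and $g$, one deduces step by step that the support of $\mu$ cannot contain two distinct points: the genuine nonlinearity $\tau''(r)\ne 0$ keeps the coupling kernel $a$ in \eqref{prova} non-trivial, providing the algebraic rigidity needed to rule out any extended support. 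Hence $\mu = \delta_{\bar u(t,x)}$ for some measurable $\bar u:Q_T\to\mathbb R^2$.

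Once this is established, \eqref{eq:5} gives $\bar u\in L^2(Q_T)$, and testing \eqref{eq:nuconv} against the identity (admissible by the linear-growth extension noted after \eqref{eq:nuconv}) identifies $\bar u$ with the weak $L^2$ limit \eqref{eq:weakL2}. The Dirac-mass identity also implies
\begin{equation*}
\int_{Q_T} \Phi(u^\delta(t,x))\, dx\, dt \;\longrightarrow\; \int_{Q_T} \Phi(\bar u(t,x))\, dx\, dt
\end{equation*}
for every bounded continuous $\Phi:\mathbb R^2\to\mathbb R$, from which a standard truncation argument gives $u^\delta\to\bar u$ in measure on $Q_T$. The bound \eqref{eq:l2b} makes $\{|u^\delta-\bar u|^p\}_{\delta>0}$ uniformly integrable for every $p\in[1,2)$, and Vitali's theorem then yields $u^\delta\to\bar u$ strongly in $L^p(Q_T)$.

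The main obstacle is the support-reduction step. Because we work in $L^2$ rather than $L^\infty$, $\mu$ need not be compactly supported, so one must test Tartar's equation against entropies that are globally bounded and twice differentiable on $\mathbb R^2$; the delicate existence of such a rich half-plane supported family is precisely what assumption ($\tau$-iii) together with the Goursat construction recalled above deliver, which is what allows Shearer's classical argument to be run verbatim in our $L^2$ setting.
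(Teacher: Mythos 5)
Your proposal is correct and mirrors the paper's structure: the support-reduction step is exactly deferred to Shearer's Lemmas~4--7 (applied to the half-plane supported entropy pairs built via the Goursat problem~\eqref{prova}, made globally bounded with bounded derivatives thanks to ($\tau$-iii)), and the upgrade from Young-measure collapse to strong $L^p$ convergence for $p<2$ is the standard combination of convergence in measure (from the Dirac limit measure) and uniform integrability of $|u^\delta-\bar u|^p$ supplied by the $L^2$ bound~\eqref{eq:l2b}, i.e.\ Vitali's theorem. The paper states the proposition immediately after citing Shearer and leaves the $L^p$ part implicit, so your write-up is a faithful expansion rather than a different route.
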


\subsection{Regularity}
\begin{prop} \label{lem:infty}
  For the function $\bar u$ obtained in section \ref{sec:young}, 
  $$
  \bar u \in L^\infty(0,T; L^2(0,1)).
  $$
\end{prop}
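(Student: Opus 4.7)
The plan is to upgrade the uniform pointwise-in-time $L^2_x$ bound from \thref{prop:energy} to the weak limit $\bar u$ by a standard weak-$*$ compactness argument in the dual of $L^1(0,T;L^2(0,1))$.

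First, I would observe that \thref{prop:energy} gives
\[
\sup_{t \in [0,T]} \int_0^1 |u^\delta(t,x)|^2 \, dx \le C
\]
uniformly in $\delta$, so $\{u^\delta\}_{\delta>0}$ is bounded in $L^\infty(0,T;L^2(0,1))$. Next, I would use the identification $L^\infty(0,T;L^2(0,1)) = \bigl(L^1(0,T;L^2(0,1))\bigr)^*$, which is valid because $L^2(0,1)$ is reflexive and separable. Since the predual $L^1(0,T;L^2(0,1))$ is separable, the sequential Banach--Alaoglu theorem furnishes a subsequence (not relabeled) and a function $v \in L^\infty(0,T;L^2(0,1))$ such that
\[
u^\delta \weakstar v \qquad \text{in } L^\infty(0,T;L^2(0,1)),
\]
with $\|v\|_{L^\infty(0,T;L^2(0,1))} \le \liminf_{\delta \to 0} \|u^\delta\|_{L^\infty(0,T;L^2(0,1))} \le \sqrt{C}$ by weak-$*$ lower semicontinuity of the norm.

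Then I would identify $v$ with $\bar u$. Because the time interval $[0,T]$ is bounded, Hölder's inequality gives the continuous embedding $L^2(Q_T) \hookrightarrow L^1(0,T;L^2(0,1))$. Hence every test function $\varphi \in L^2(Q_T)$ is a legitimate element of the predual, and the weak-$*$ convergence above yields
\[
\int_{Q_T} u^\delta \varphi \, dx \, dt \;\longrightarrow\; \int_{Q_T} v \varphi \, dx \, dt \qquad \forall \varphi \in L^2(Q_T).
\]
Comparing with \eqref{eq:weakL2}, uniqueness of weak limits in $L^2(Q_T)$ forces $v = \bar u$ almost everywhere on $Q_T$. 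Therefore $\bar u \in L^\infty(0,T;L^2(0,1))$ with $\|\bar u\|_{L^\infty(0,T;L^2(0,1))}^2 \le C$.

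I do not foresee any serious obstacle; the only point requiring a moment of care is the compatibility of the two dualities (the $L^\infty_t L^2_x$ weak-$*$ pairing versus the $L^2(Q_T)$ weak pairing), which is resolved by the embedding of $L^2(Q_T)$ into the predual for bounded time intervals.
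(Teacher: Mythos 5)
Your proposal is correct, but it takes a genuinely different route from the paper. The paper's proof exploits the strong $L^p(Q_T)$ convergence $u^\delta \to \bar u$ for $p<2$ (already obtained from the compensated compactness argument) to extract a subsequence converging pointwise a.e.\ on $Q_T$, and then applies Fatou's lemma time-slice by time-slice to transfer the uniform-in-$t$ bound $\int_0^1 |u^\delta(t,\cdot)|^2\,dx \le C$ of \thref{prop:energy} directly to $\bar u$. You instead argue abstractly at the function-space level: the uniform bound makes $\{u^\delta\}$ bounded in $L^\infty(0,T;L^2(0,1))$, which is the dual of the separable space $L^1(0,T;L^2(0,1))$ (this duality holds because $L^2(0,1)$ is separable and has the Radon--Nikod\'ym property), so sequential Banach--Alaoglu gives a weak-$*$ limit $v$ that inherits the bound; the embedding $L^2(Q_T) \hookrightarrow L^1(0,T;L^2(0,1))$ on the bounded time interval then lets you test the weak-$*$ convergence against $L^2(Q_T)$ functions and identify $v$ with the weak $L^2(Q_T)$ limit $\bar u$ from \eqref{eq:weakL2}. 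Both arguments are sound. Yours is the softer and more economical one in the sense that it needs only the weak $L^2$ convergence, not the strong convergence in $L^p$ (so it would work even before the Young-measure analysis is carried out); the paper's is more elementary once pointwise convergence is already available, requiring nothing beyond Fatou. One minor point worth making explicit in your write-up: the further subsequence you extract is a subsequence of the one already chosen to define $\bar u$ in \eqref{eq:weakL2}, so its weak $L^2(Q_T)$ limit is still $\bar u$, and the identification $v=\bar u$ is legitimate.
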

\begin{proof}
Since $u^\delta \to \bar u$ in $L^p$ strong for $p<2$, we can extract a subsequence $\{u^{\delta_k}\}_{k \in \mathbb N}$ that converges pointwise to $\bar u$ for almost all $t$ and $x$. In particular, for almost all $t$, the sequence $u^{\delta_k}(t,x)$ converges for almost all $x$. Therefore, by Fatou lemma and Theorem \ref{prop:energy}, 
\begin{equation}
\int_0^1 |\bar u(t,x)|^2 dx \le \liminf_{k \to \infty} \int_0^1 |u^{\delta_k}(t,x)|^2 dx \le C
\end{equation}
for almost all $t \in [0,T]$.
\end{proof}
The proof is of next lemma is standard and therefore omitted.
\begin{lem}
Let $a(t) := \tau^{-1}(\taubar(t))$. Then, the solutions $(r^\delta, p^\delta)$ of the viscous system \eqref{eq:vpsystem} can be written as follows:
\begin{align}
r^\delta(t,x)  =a(t)&+ \int_0^1 G^\delta_r(x,x',t)( r_0^\delta(x')-a(0)) d x' +
\\ 
&+ \int_0^t \int_0^1 G^\delta_r(x,x',t-t') (\partial_{x'}p^\delta(t',x')-a(t'))dx' dt' \nonumber
\end{align}
\begin{align}
p^\delta(t,x)  = \int_0^1 G^\delta_p(x,x',t) p_0^\delta(x') d x' + \int_0^t \int_0^1 G^\delta_p(x,x',t-t')\partial_{x'}\tau(r^\delta(t',x')) dx' dt'
\end{align}
where the $G^\delta_r$ and $G^\delta_p$ are Green functions of the heat  operator $\partial_t - \delta \partial_{xx}$ with homogeneous boundary conditions:
\begin{align}
G^\delta_r(1,x',t) = \partial_x G^\delta_r(0,x',t) = 0
\\
G^\delta_p(0,x',t) = \partial_x G^\delta_p(1,x',t) =0
\end{align}
for all $x' \in [0,1]$, $t \ge 0$ and $\delta > 0$.
\end{lem}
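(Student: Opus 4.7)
The plan is to derive each representation by Duhamel's principle applied to the appropriate heat semigroup on $[0,1]$ with the mixed (Dirichlet/Neumann) boundary conditions that match those of $r^\delta$ and $p^\delta$, respectively. Since the statement says the proof is standard and omitted, I will only sketch the reduction; the two equations are treated separately because their boundary data live on opposite ends of the interval.

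For the $p^\delta$-equation, the boundary conditions $p^\delta(t,0)=0$ and $p^\delta_x(t,1)=0$ are already homogeneous, so no change of variables is needed. I would rewrite the second equation of \eqref{eq:vpsystem} as
\begin{equation*}
  \bigl(\partial_t - \delta\partial_{xx}\bigr) p^\delta(t,x) = \partial_x \tau(r^\delta(t,x)),
\end{equation*}
view the right-hand side as a source, and apply the Duhamel formula against the Green function $G^\delta_p(x,x',t)$ of $\partial_t-\delta\partial_{xx}$ on $[0,1]$ with Dirichlet condition at $x=0$ and Neumann condition at $x=1$. Existence, smoothness for $t>0$ and the prescribed homogeneous boundary conditions of $G^\delta_p$ are classical (spectral decomposition on the appropriate mixed eigenbasis, or reflection arguments). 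This immediately yields the displayed formula for $p^\delta$.

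For the $r^\delta$-equation, the inhomogeneous Dirichlet condition $r^\delta(t,1)=a(t)$ must first be lifted. I would set $\tilde r^\delta(t,x) := r^\delta(t,x)-a(t)$, which satisfies
\begin{equation*}
  \bigl(\partial_t - \delta\partial_{xx}\bigr)\tilde r^\delta = \partial_x p^\delta - a'(t),
  \qquad \tilde r^\delta(t,1)=0,\qquad \partial_x\tilde r^\delta(t,0)=0,
\end{equation*}
with initial datum $\tilde r^\delta(0,x)=r_0^\delta(x)-a(0)$. Applying Duhamel with the Green function $G^\delta_r(x,x',t)$ for $\partial_t-\delta\partial_{xx}$ on $[0,1]$ with Neumann condition at $x=0$ and Dirichlet at $x=1$, and then adding back the lift $a(t)$, gives the stated representation (interpreting the source term in the second integral as $\partial_{x'}p^\delta-a'(t')$, which is the natural reading of the forcing produced by the change of variable). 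The homogeneous boundary properties of $G^\delta_r$ listed in the lemma are exactly those needed so that the boundary traces of the representation reproduce $r^\delta(t,1)=a(t)$ and $r^\delta_x(t,0)=0$.

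The only mildly delicate point is checking that the action of $\partial_{x'}$ on $p^\delta$ in the source term is justified (so that the integration by parts implicit in Duhamel is admissible): this follows from the regularity $r^\delta,p^\delta\in C^0(\mathbb R_+;C^2([0,1]))$ established in \cite{AlasioMarchesani} and recalled after \eqref{eq:vic2}. Once the smoothness is in hand, the identities are simply two applications of the same semigroup identity, which is why the authors state that the proof is standard and skip it.
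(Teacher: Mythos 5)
Your argument is correct and is exactly the standard Duhamel computation the authors have in mind when they say the proof is omitted: homogeneous mixed boundary conditions make the $p^\delta$-representation immediate, and the lift $\tilde r^\delta = r^\delta - a(t)$ reduces the $r^\delta$-equation to the same situation. You were also right to flag the source term: the change of variables produces $\partial_{x'}p^\delta(t',x') - a'(t')$, so the $a(t')$ appearing in the lemma as printed should be read as $a'(t')$; your correction is the consistent one, since the representation must reproduce $r^\delta(t,1)=a(t)$, which fails with $a(t')$ but holds with $a'(t')$.
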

The Green's functions $G^\delta_r(x,x',t)$ and $G_p^\delta(x,x',t)$ are symmetric under the exchange of $x$ and $x'$. Moreover we have the following identities
\begin{align}
\partial_x G^\delta_p(x,x',t) = - \partial_{x'} G_r^\delta(x,x',t),
\\
\partial_x G^\delta_r(x,x',t) = - \partial_{x'} G_p^\delta(x,x',t).
\end{align}
\begin{oss}
The functions $G^\delta_r$ and $G_p^\delta$ have the following explicit forms:
\begin{align}
G_p^\delta(x,x',t) &= { \frac 12} \sum_{n\text{ odd}} e^{-t\delta \lambda_n} \sin \left(\sqrt{\lambda_n}x \right)\sin\left(  \sqrt{\lambda_n}x' \right)
\\
G_r^\delta(x,x',t) &= { \frac 12} \sum_{n\text{ odd}} e^{-t\delta \lambda_n} \cos \left(\sqrt{\lambda_n}x \right)\cos \left(  \sqrt{\lambda_n} x' \right),
\end{align}
with $\lambda_n =\dfrac{n^2 \pi^2}{4}$.
\end{oss}
\begin{prop}\label{cont}
For any $\phi \in C^1([0,1])$, the application
\begin{equation}
t \mapsto I_\phi(t) := \int_0^1 \phi(x) \bar u(t,x)dx
\end{equation}
is Lipschitz continuous.
Consequently $\bar u(t,\cdot) \in L^2(0,1)$ for all $t\ge 0$.
\end{prop}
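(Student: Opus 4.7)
Plan: I first establish a Lipschitz estimate for $I_\phi^\delta(t) := \int_0^1 \phi(x) u^\delta(t,x) dx$ uniformly in $\delta$, and then transfer it to $\bar u$ via the strong $L^p$-convergence, $p<2$, from section \ref{sec:young}. The Green's function representation of the preceding lemma is essential here: the weak formulation \eqref{eq:maineqr}--\eqref{eq:maineqp} only admits test functions vanishing at the right boundary, while $\phi\in C^1([0,1])$ is arbitrary, and a direct integration by parts against the viscous equation brings in the uncontrolled boundary values $p^\delta(t,1)$ and $r^\delta(t,0)$.

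Consider the $\bar r$-component (the $\bar p$-component is symmetric). Plugging the representation for $r^\delta$ into $I_\phi^\delta$ and using the symmetry of $G_r^\delta(x,x',t)$ under $x\leftrightarrow x'$ yields
\begin{align*}
\int_0^1 \phi(x)\, r^\delta(t,x)\,dx &= a(t)\int_0^1\phi\,dx + \int_0^1\Phi_\delta(x',t)\bigl(r_0^\delta(x')-a(0)\bigr)dx' \\
&\quad - \int_0^t \int_0^1 \partial_{x'}\Phi_\delta(x',t-s)\,p^\delta(s,x')\,dx'\,ds \\
&\quad - \int_0^t a'(s) \int_0^1 \Phi_\delta(x',t-s)\,dx'\,ds,
\end{align*}
where $\Phi_\delta(\cdot,t) := \int_0^1\phi(x)\,G_r^\delta(x,\cdot,t)\,dx$ is $\phi$ smoothed by the Neumann-Dirichlet heat semigroup with generator $\delta\partial_{x'}^2$. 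The integration by parts in $x'$ moving the derivative off $p^\delta$ is legitimate because $\Phi_\delta(1,t)=0$ (by $G_r^\delta(x,1,t)=0$) and $p^\delta(s,0)=0$. An analogous identity holds for the $\bar p$-component through $G_p^\delta$, with the $x'=1$ boundary contributing an additive term $\int_0^t\Psi_\delta(1,t-s)\taubar(s)\,ds$ which is itself smooth in $t$.

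Next, differentiate in $t$ and estimate term by term. The smooth forcings $a, a', \taubar$ give immediate Lipschitz contributions. The time derivative of the initial-data term is $\delta\int\partial_{x'}^2\Phi_\delta(x',t)\bigl(r_0^\delta-a(0)\bigr)dx'$, which after integrating by parts once (using $\partial_{x'}\Phi_\delta(0,t)=0$ from Neumann and $(r_0^\delta-a(0))(1)=0$ from \eqref{eq:vic1}) becomes $-\delta\int\partial_{x'}\Phi_\delta\cdot\partial_{x'}r_0^\delta\,dx'$, bounded by $\sqrt\delta\|\phi'\|_{L^2}\cdot\sqrt\delta\|\partial_x r_0^\delta\|_{L^2}\le C$ uniformly in $\delta$ by \eqref{eq:vic2}. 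The endpoint $s=t$ of the time convolution produces $-\int\phi'(x')p^\delta(t,x')\,dx'$, bounded by $\|\phi'\|_{L^2}\|p^\delta(t)\|_{L^2}\le C$ via Theorem \ref{prop:energy}. Finally, the derivative hitting the kernel inside the convolution produces $\partial_t\partial_{x'}\Phi_\delta(x',t-s)=\delta\partial_{x'}^3\Phi_\delta$; we rewrite $\partial_t(\cdot)(t-s)=-\partial_s(\cdot)(t-s)$ and integrate by parts in $s$, then substitute $\partial_s p^\delta=\partial_x\tau(r^\delta)+\delta\partial_x^2 p^\delta$ from the viscous equation and close with the uniform bounds on $\|\tau(r^\delta)\|_{L^2}$ and $\sqrt\delta\|\partial_x p^\delta\|_{L^2(Q_T)}$. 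The upshot is $|\partial_t I_\phi^\delta(t)|\le C(\phi)$ uniformly in $\delta$ and $t\in[0,T]$.

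The strong $L^p$-convergence $u^\delta\to\bar u$ then gives $I_\phi^\delta\to I_\phi$ in $L^1(0,T)$; combined with the uniform Lipschitz bound, Arzel\`a-Ascoli furnishes a Lipschitz representative of $I_\phi$ on $[0,T]$. For the last assertion, fix $t\in[0,T]$ and pick $s_n\to t$ from the full-measure set of times at which $\bar u(s_n,\cdot)\in L^2(0,1)$ and $\|\bar u(s_n)\|_{L^2}\le C$ (Proposition \ref{lem:infty}); extract a weak-$L^2$ subsequential limit $w$, and use the Lipschitz continuity of $I_\phi$ for $\phi$ in a countable dense subset of $C^1([0,1])\subset L^2(0,1)$ to identify $w=\bar u(t,\cdot)$, whence $\bar u(t,\cdot)\in L^2(0,1)$ for every $t$. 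The main obstacle is the term involving $\partial_t\partial_{x'}\Phi_\delta$: a naive estimate generates a $\delta^{-1}$ that destroys uniformity, and only the time-integration by parts combined with the viscous equation salvages a $\delta$-uniform bound.
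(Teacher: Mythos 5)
Your overall plan — use the Green's-function (Duhamel) representation of the preceding lemma rather than the weak formulation, exploit the boundary structure of $G^\delta_r,G^\delta_p$ to kill the uncontrolled trace terms, get a uniform-in-$\delta$ Lipschitz bound for $I^\delta_\phi$, and transfer to $\bar u$ by the strong $L^p$, $p<2$, convergence — is exactly the route of the paper. But the specific engine you propose for the Duhamel term has a genuine gap, and it is precisely the $\delta^{-1}$ you claim to have defused.

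After moving the $x'$-derivative onto $\Phi_\delta(\cdot,t-s)=\int\phi\,G^\delta_r(\cdot,x',t-s)\,dx$, differentiating in $t$, rewriting $\partial_t\mapsto -\partial_s$ and integrating by parts in $s$, you are left with (up to harmless endpoint contributions, whose bookkeeping you also garble slightly: the $s=t$ boundaries from Leibniz and from the $s$-integration by parts cancel, leaving an $s=0$ term $\int\partial_{x'}\Phi_\delta(x',t)r_0^\delta\,dx'$, not a $p^\delta(t)$ term) the convolution
\[
\int_0^t\!\!\int_0^1 \partial_{x'}\Phi_\delta(x',t-s)\,\partial_s p^\delta(s,x')\,dx'\,ds .
\]
Substituting $\partial_s p^\delta=\partial_{x'}\tau(r^\delta)+\delta\,\partial_{x'}^2 p^\delta$ is where it breaks. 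The $\delta\,\partial^2_{x'}p^\delta$ piece is fine: one $x'$-integration by parts (boundary terms vanish by $\partial_{x'}\Phi_\delta(0,\cdot)=0$ and $p^\delta_x(\cdot,1)=0$) gives $-\delta\int\!\!\int\partial^2_{x'}\Phi_\delta\,\partial_{x'}p^\delta$, and Cauchy--Schwarz in $(s,x')$ together with the parabolic energy identity $\int_0^\infty \delta\|\partial^2_{x'}\Phi_\delta(\cdot,\sigma)\|_{L^2}^2\,d\sigma\le\frac12\|\phi'\|_{L^2}^2$ and $\int_0^T\delta\|\partial_{x'}p^\delta\|^2\le C$ closes. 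The $\partial_{x'}\tau(r^\delta)$ piece does not close. You cannot estimate it without an $x'$-integration by parts (since $\|\partial_{x'}\tau(r^\delta)\|_{L^2}\sim\delta^{-1/2}$), and after integrating by parts (against $\tau(r^\delta)-\taubar(s)$ to kill the boundary) you must bound $\int_0^t\|\partial^2_{x'}\Phi_\delta(\cdot,t-s)\|_{L^2}\,\|\tau(r^\delta(s))-\taubar(s)\|_{L^2}\,ds$. But $\Phi_\delta$ is a unit-strength datum diffused at rate $\delta$, so $\int_0^t\|\partial^2_{x'}\Phi_\delta(\cdot,\sigma)\|_{L^2}\,d\sigma\lesssim \sqrt{t/\delta}\,\|\phi'\|_{L^2}$. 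There is no $\delta$ left to pay for this; the promised ``uniform bounds on $\|\tau(r^\delta)\|_{L^2}$'' are simply the wrong currency.

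The paper's actual proof never touches the viscous equation for $\partial_s p^\delta$ and never takes a $t$-derivative. It estimates the \emph{increment} $I^\delta_\phi(t)-I^\delta_\phi(0)$ directly on the Duhamel integral, expands $G^\delta$ in its sine/cosine eigenbasis, applies Cauchy--Schwarz first over the time convolution and then over the mode index $n$, and exploits the elementary uniform bound $\dfrac{1-e^{-2t\lambda_n\delta}}{2\lambda_n\delta}\le t$. This produces the factor $t\,\|\phi'\|_{L^2}\,\|\tau(r^\delta)\|_{L^\infty_tL^2_x}$ cleanly, with no spatial derivative ever landing on $\tau(r^\delta)$. If you want to keep the differentiate-in-$t$ structure, drop the viscous substitution entirely: after Leibniz, the interior term is already $\delta\int_0^t\!\int\partial^3_{x'}\Phi_\delta(t-s)\,p^\delta(s)$, and a single $x'$-integration by parts plus the parabolic smoothing estimate above gives $|\partial_t I^\delta_\phi|\le C\|\phi'\|_{L^2}$ uniformly, with no reference to $\tau(r^\delta)_x$ at all.

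Your final step (identifying the weak-$L^2$ subsequential limit at a fixed $t$ via a countable set of test functions and the Lipschitz continuity of $I_\phi$, thereby redefining $\bar u(t,\cdot)\in L^2$ for every $t$) is correct and in fact spells out something the paper leaves implicit.
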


\begin{proof}
We prove the statement for $\bar p$, as the proof for $\bar r$ is similar. Furthermore, we prove the proposition only between $0$ and $t$, as in the general case, say between $t_1$ and $t$, it is enough to replace the initial term $p_0^\delta(x)$ with $p^\delta(t_1,x)$. We let
\begin{equation}
I_\phi^\delta (t) :=  \int_0^1 \phi(x) p^\delta(t,x) dx
\end{equation}
and evaluate
\begin{align}
I_\phi(t)-I_\phi(0)= & \int_0^1 \int_0^1 \phi(x) G^\delta_p(x,x',t) p_0^\delta(x') d x' dx- \int_0^1 \phi(x) p_0^\delta(x) d x +
\\
&+ \int_0^t \int_0^1 \int_0^1 \phi(x) G_p^\delta(x,x',t-t') \partial_{x'}\tau(r^\delta(t',x'))dx dx' dt' \nonumber
\end{align}
\begin{align} \label{eq:primo}
=& {\ \int_0^1 \int_0^1 \phi(x) \left[ G_p^\delta(x,x',t) - \delta(x,x')\right] p_0^\delta(x')\ dx' dx }
\\
  &+ \int_0^t \int_0^1 \int_0^1 \phi(x) \partial_x G^\delta_r(x,x',t-t') \tau(r^\delta(t',x')) dx dx' dt'
    + \int_0^t \int_0^1 \phi(x) G_p^\delta(x,1,t-t') \taubar(t') dx dt', \nonumber
\end{align}
where we have used the symmetry of $G^\delta_p$
as well as the property $\partial_{x'}G_p^\delta = - \partial_x G_r^\delta$.
The boundary term is estimated as
\begin{equation}
  \left |\int_0^t \int_0^1 \phi(x) G_p^\delta(x,1,t-t') \taubar(t') dx dt' \right| \le
  \|\taubar\|_\infty \int_0^t \left| \int_0^1 \phi(x) G_p^\delta(x,1,t-t') dx \right| dt'
  \le t  \|\taubar\|_\infty  \| \phi \|_{L^2}.
\end{equation}

 We estimate the term involving $\tau$ as
\begin{equation}
  \label{eq:7}
  \begin{split}
    \left | \int_0^t \int_0^1 \int_0^1 \phi(x) \partial_x G^\delta_r(x,x',t-t') \tau(r^\delta(t',x')) dx dx' dt' \right |
   \\ = 
 \left| \frac 12 \sum_{n \text{odd}} 
  \int_0^1 \sqrt{\lambda_n} \sin(\sqrt{\lambda_n}x) \phi(x) dx  \int_0^t dt' e^{-(t-t')\lambda_n \delta}
  \int_0^1 \cos(\sqrt{\lambda_n}x') \tau(r^\delta(t',x')) dx' \right|\\
  \le  \| \phi' \|_{L^2} \left[ \frac 12 \sum_{n \text{odd}} \left(\int_0^t dt' e^{-(t-t')\lambda_n \delta}
      \int_0^1 \cos(\sqrt{\lambda_n}x') \tau(r^\delta(t',x')) dx'\right)^2 \right]^{1/2}\\
\le  \| \phi' \|_{L^2} \left[ \frac 12 \sum_{n \text{odd}}
    \left(\frac{1-e^{-2 t\lambda_n \delta}}{2\lambda_n\delta} \right) \int_0^t dt'
    \left(\int_0^1 \cos(\sqrt{\lambda_n}x') \tau(r^\delta(t',x')) dx'\right)^2\right]^{1/2}\\
\le \| \phi' \|_{L^2} \left[ \frac{t}2 \int_0^t dt' \sum_{n \text{odd}}
    \left(\int_0^1 \cos(\sqrt{\lambda_n}x') \tau(r^\delta(t',x')) dx'\right)^2\right]^{1/2}\\
 = \| \phi' \|_{L^2} \left[ t \int_0^t dt' \int_0^1 \tau(r^\delta(t',x'))^2 dx' \right]^{1/2}
\end{split}
\end{equation}
\begin{align}
  & \le t \|\phi'\|_{L^2} \left\| \int_0^1 \tau(r^\delta(\cdot,x'))^2dx'  \right\|_{L^\infty(0,T)}^{1/2}
    \le C  t  \|\phi'\|_{L^2}
\end{align}
where $C$ is independent of $t$ and $\delta$.

In order to estimate the first term of \eqref{eq:primo} we write
{
  \begin{align}
    \label{eq:9}
      \int_0^1 \int_0^1& \phi(x) \left[ G_p^\delta(x,x',t) - \delta(x,x')\right] p_0^\delta(x')\ dx' dx\\
   &=    \frac 12 \sum_{n \text{odd}} 
  \int_0^1 \sin(\sqrt{\lambda_n}x) \phi(x) dx  \left(e^{-t\lambda_n \delta} - 1\right) \nonumber
  \int_0^1 \sin(\sqrt{\lambda_n}x') p_0^\delta(x') \; dx'\\
  &\le  t\delta  \frac 12 \sum_{n \text{odd}} \lambda_n \nonumber
  \left|\int_0^1 \sin(\sqrt{\lambda_n}x) \phi(x) dx \right|
  \left|\int_0^1 \sin(\sqrt{\lambda_n}x') p_0^\delta(x') \; dx'\right|\\
  &\le t \|\phi'\|_{L^2} \|\delta \partial_x p_0^\delta\|_{L^2} \le  t \|\phi'\|_{L^2} C \nonumber
   \end{align}
     }
  where we have used the assumption that $\{\sqrt \delta \partial_x p_0^\delta\}_{\delta >0}$ is bounded in $L^2(0,1)$.
  We have also assumed, without loss of generality, $\delta \le 1$.

  Putting everything together, we have obtained
\begin{equation}
\left| I^\delta_\phi(t)-I^\delta_\phi(0) \right | \le t C \left(\|\phi'\|_{L^2} + \|\phi\|_{L^2}\right)
\end{equation}
for some constant $C$ independent of $t$ and $\delta$.
This leads to the conclusion after passing to the limit $\delta \to 0$.
\end{proof}

\section{Proof of theorem \ref{th-main} and Clausius inequality}
\label{sec:proof-theorem-refth}

 All is left to prove is that the function  $\bar u$ obtained in the previous section is to a weak solution
 of the hyperbolic system \eqref{eq:psystem},
 in the sense of Section \ref{sec:definition}.
 Let $\psi \in C^1(Q_T)$  with $\psi(t,0)=0$ for all $t \in [0,T]$.
 Then, for any $t \in[0, T]$ we have
\begin{align} \label{eq:weak_parabolic}
0 & =\int_0^t\int_0^1\left(  \psi  p_s^\delta- \psi \tau(r^\delta)_x - \deltadue  \psi  p^\delta_{xx} \right)dxds \nonumber
\\
&=\int_0^1\psi(t,x)p^\delta(t,x)dx-\int_0^1\psi(0,x)p_0^\delta(x)dx+
\\
& - \int_0^t \int_0^1\left(\psi_s p^\delta- \psi_x \tau(r^\delta)-
  \deltadue  \psi_x  p_x^\delta \right)dx ds-\int_0^\infty \psi(t,1) \taubar(t) dt. \nonumber
\end{align} 
where we have used the initial-boundary conditions $\tau(r^\delta(t,1)) = \bar \tau(t)$
and $  p^\delta_x(t,1) =0$, $p^\delta(0,x)=p^\delta_0(x)$ as well as $\psi(t,0)=0$.
Since $p_0^\delta$ converges to $p_0$ in $L^2(0,1)$, we have
\begin{equation}
\lim_{\delta \to 0} \int_0^1 \psi(0,x) p_0^\delta(x)dx = \int_0^1 \psi(0,x) p_0(x) dx.
\end{equation}
Furthermore, \thref{prop:energy} implies  $\sqrt{\deltadue} p_x^\delta \in L^2(Q_T)$,
consequently 
$\int_0^t \int_0^1 \deltadue \psi_x p_x^\delta dx ds$
vanishes as $\deltadue \to 0$.
Moreover, \eqref{eq:weakL2} implies, along the subsequence that defines $\bar u = (\bar r, \bar p)$,
\begin{equation}
\lim_{\delta \to 0}\int_0^t \int_0^1  \psi_t p^\delta dxds  = \int_0^t \int_0^1  \psi_s \bar p dxdt,
\end{equation}
while by \eqref{eq:nonlinearlimit} we have that
\begin{equation*}
\lim_{\delta \to 0}  \int_0^t \int_0^1 \psi_x \tau(r^\delta) dx ds = \int_0^t \int_0^1 \psi_x \tau(\bar r) dx ds, 
\end{equation*}
so that \eqref{eq:maineqp} is satisfied. The \eqref{eq:maineqr} is linear and it follows similarly.

\begin{prop}
The solution $\bar u$ satisfies Clausius inequality
\begin{equation}
  \label{eq:v20}
   \mathcal F(\bar u(t)) - \mathcal F(u_0) \le  W(t)
\end{equation}
for 
all $t \in[ 0,T]$, where
\begin{equation}
W(t) =- \int_0^t \taubar'(s) \int_0^1 \bar r (s,x)dx + \taubar(t) \int_0^1 \bar r(t,x) dx - \taubar(0)\int_0^1 r_0(x)dx.
\end{equation}
\end{prop}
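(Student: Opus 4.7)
The plan is to pass to the limit as $\delta \to 0$ in the viscous Clausius inequality of \thref{prop:clausius}. Writing
\[
W^\delta(t) := -\int_0^t \taubar'(s)\!\int_0^1 r^\delta(s,x)\,dx\,ds + \taubar(t)\!\int_0^1 r^\delta(t,x)\,dx - \taubar(0)\!\int_0^1 r_0^\delta(x)\,dx,
\]
that inequality reads $\mathcal F(u^\delta(t)) - \mathcal F(u_0^\delta) \le W^\delta(t)$ for every $t \in [0,T]$ and every $\delta>0$. To avoid evaluating $u^\delta$ pointwise in $t$, I would first test against an arbitrary nonnegative $\theta \in C_c^\infty((0,T))$ and integrate, obtaining the scalar inequality $\int_0^T \theta\,\mathcal F(u^\delta(t))\,dt \le \int_0^T \theta\,[\mathcal F(u_0^\delta)+W^\delta(t)]\,dt$.

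For the left-hand side, the strong $L^p(Q_T)$ convergence $u^\delta \to \bar u$ for every $p \in [1,2)$ proved in Section \ref{sec:young} gives a.e. pointwise convergence along a further subsequence, and since the integrand $\theta(t)[(p^\delta)^2/2 + F(r^\delta)]$ is nonnegative, Fatou's lemma yields
\[
\int_0^T \theta(t)\,\mathcal F(\bar u(t))\,dt \;\le\; \liminf_{\delta \to 0}\int_0^T \theta(t)\,\mathcal F(u^\delta(t))\,dt.
\]
On the right-hand side, $\mathcal F(u_0^\delta) \to \mathcal F(u_0)$ follows from the strong $L^2$ convergence of the initial data together with the quadratic bound on $F$; moreover $r^\delta \to \bar r$ strongly in $L^1(Q_T)$ and $\taubar, \taubar'$ are bounded, so Fubini combined with dominated convergence handles each term of $\int_0^T \theta(t) W^\delta(t)\,dt$ (for the term $\taubar(t)\!\int_0^1 r^\delta(t,x)\,dx$, Fubini gives $\int_0^1 r^\delta(t,\cdot)\,dx \to \int_0^1 \bar r(t,\cdot)\,dx$ strongly in $L^1(0,T)$). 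Combining these steps yields
\[
\int_0^T \theta(t)\,\bigl[\mathcal F(\bar u(t)) - \mathcal F(u_0) - W(t)\bigr]\,dt \le 0
\]
for every nonnegative $\theta \in C_c^\infty((0,T))$, hence the bracket is $\le 0$ for a.e.\ $t \in [0,T]$.

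To upgrade from a.e.\ to every $t$ I would use the Lipschitz-in-time regularity of Proposition \ref{cont}: it makes $W(t)$ continuous, and, combined with the uniform $L^2$ bound of Proposition \ref{lem:infty}, it yields a weakly-$L^2$ continuous representative of $t \mapsto \bar u(t,\cdot)$ (by density of $C^1([0,1])$ in $L^2(0,1)$). The convex functional $\mathcal F$ is then weakly lower semicontinuous on $L^2(0,1)$, so $t \mapsto \mathcal F(\bar u(t))$ is lower semicontinuous. For any $t_0 \in [0,T]$, choosing a sequence $t_n \to t_0$ along which the a.e.\ inequality holds gives
\[
\mathcal F(\bar u(t_0)) \le \liminf_{n} \mathcal F(\bar u(t_n)) \le \liminf_n [\mathcal F(u_0) + W(t_n)] = \mathcal F(u_0) + W(t_0).
\]

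The hard step is the liminf on the left-hand side: because $\mathcal F$ is quadratic in $u^\delta$, the weak $L^2(Q_T)$ convergence alone is insufficient, and the passage really relies on the strong $L^p$ convergence for $p<2$ coming from compensated compactness in Section \ref{sec:young}. Everything else is soft: the convergence of the right-hand side needs only $L^1$ strong convergence of $r^\delta$ and the boundedness of $\taubar,\taubar'$, and the extension from a.e.\ to every $t$ is a standard lower-semicontinuity argument relying on Proposition \ref{cont}.
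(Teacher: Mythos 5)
Your proposal is correct and follows essentially the same route as the paper: pass to the limit in the viscous Clausius inequality of Corollary~\ref{prop:clausius}, using Fatou together with the strong $L^p(Q_T)$ ($p<2$) convergence of $u^\delta$ to handle $\mathcal F(\bar u(t))$, the strong $L^2$ convergence of the initial data for $\mathcal F(u_0^\delta)\to\mathcal F(u_0)$, and $L^1$-type convergence for the work term. The only organizational difference is that you take a short detour through test functions $\theta\in C_c^\infty((0,T))$ to first establish the inequality for a.e.\ $t$, and you spell out explicitly the extension to all $t$ via weak-$L^2$ continuity of the representative (from Proposition~\ref{cont} and Proposition~\ref{lem:infty}) plus weak lower semicontinuity of the convex functional $\mathcal F$; the paper argues directly pointwise in $t$ and leaves the lower semicontinuity step implicit, invoking only the continuity of $t\mapsto\int_0^1\bar r(t,x)\,dx$. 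Your version fills in that gap cleanly without introducing any genuinely new idea.
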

\begin{proof}
  By Proposition \ref{lem:infty}, Corollary \ref{prop:energy},  and Lemma \ref{cont},
  we have, for 
  all $t \in [0,T]$,
\begin{equation}
\int_0^1 \left( \frac{\bar p^2(t,x)}{2} + F(\bar r(t,x))\right)dx  \le \liminf_{k \to \infty} \int_0^1  \left( \frac{ ( p^{\delta_k})^2(t,x)}{2} + F( r^{\delta_k}(t,x))\right)dx
\end{equation}
\begin{align}
 &\le  \lim_{k \to \infty} \left( \mathcal F(u_0^{\delta_k})- \int_0^t \taubar'(s) \int_0^1 r^{\delta_k} (s,x)dxds + \taubar(t) \int_0^1 r^{\delta_k}(t,x) dx - \taubar(0)\int_0^1 r_0^{\delta_k}(x)dx \right)
\\
& = \mathcal F(u_0)- \int_0^t \taubar'(s) \int_0^1 \bar r (s,x)dx ds+ \taubar(t) \int_0^1 \bar r(t,x) dx - \taubar(0)\int_0^1 r_0(x)dx,
\end{align}
where we have used the fact that $u_0^\delta$ converges to $u_0$ in $L^2$ strongly
in order to conclude that $\mathcal F(u^\delta_0) \to \mathcal F(u_0)$.
Moreover, all the integrals are well defined, since the application
$$
t \mapsto \int_0^1 \bar r(t,x) dx
$$
is continuous.
\end{proof}
Thanks to the Clausius inequality,
the solutions we have constructed are natural candidates for being
the thermodynamic entropy solution of the equation 
\eqref{eq:vpsystem} and one can conjecture that such limit is unique. 


\section{Lax entropy condition}
\label{sec:lax}

From \eqref{eq:laxProduction}, if $\eta$ is a convex Lax entropy, we have that
\begin{equation} \label{eq:laxProduction2}
 \eta(u^\delta)_t+ q(u^\delta)_x\ge \delta \left(  \eta_r   r_x^\delta+   \eta_p  p_x^\delta \right)_x.
\end{equation}
If $\eta$ grows at most quadratically, the right hand side of \eqref{eq:laxProduction2} vanished
in $H^{-1}(Q_T)$, and by \eqref{eq:h-1}, for the limit we have that $\eta(u)_t+ q(u)_x \ge 0$
as a distribution in $H^{-1}(Q_T)$.
This is the usual \emph{local} characterization of weak entropy solutions, that is independent of the
boundary conditions and does not give informations of the \emph{behaviour} at the boundary.
Our solutions obtained from viscosity approximation satisfy such local entropy condition.

Our point is that this local characterization should be
implemented by the global entropy production for the
entropy given by the free energy $\eta(u) = \mathcal F(u)$,
i.e. the Clausius inequality.

\section*{Acknowledgments} 
This work has been partially supported by the grants ANR-15-CE40-0020-01 LSD 
of the French National Research Agency.
We thank Olivier Glass for very helpful discussions.
We also thank an anonomous referee, whose critics and comments helped us to improve the first version of this article.

\addcontentsline{toc}{chapter}{References}

\renewcommand{\bibname}{References}
\nocite{*}
\bibliography{bibliografia}
\bibliographystyle{plain}

\noindent
{Stefano Olla\\
CEREMADE, UMR-CNRS, Universit\'e de Paris Dauphine, PSL Research University}\\
{\footnotesize Place du Mar\'echal De Lattre De Tassigny, 75016 Paris, France}\\
{\footnotesize \tt olla@ceremade.dauphine.fr}\\
\\
{Stefano Marchesani\\
GSSI, \\
{\footnotesize Viale F. Crispi 7, 67100 L'Aquila, Italy}}\\
{\footnotesize \tt stefano.marchesani@gssi.it}\\

\end{document}